
\documentclass[11pt]{amsart}


\makeatletter

\def\Dm{\mathbf{D}_m}
\def\IDm{\mathbf{ID}_m}
\def\lach{\Psi}
\makeatother

\usepackage{hyperref}
\usepackage{amsthm,amssymb}

\theoremstyle{theorem}
\newtheorem{theorem}{Theorem}[section]
\newtheorem{proposition}{Proposition}[section]
\newtheorem{corollary}{Corollary}[section]
\newtheorem{lemma}{Lemma}[section]

\theoremstyle{definition}
\newtheorem{problem}{Problem}

\begin{document}

\title[Isomorphism types of Rogers semilattices]{Isomorphism types of Rogers semilattices in the analytical hierarchy}


\author{Nikolay Bazhenov}

\address{Sobolev Institute of Mathematics\\
4 Acad. Koptyug Ave., Novosibirsk, 630090, Russia\\
and Department of Mathematics, School of Sciences and Humanities\\
Nazarbayev University\\
53 Qabanbaybatyr Ave.,
Nur-Sultan, 010000, Kazakhstan}
\email{bazhenov@math.nsc.ru}

\author{Sergey Ospichev}

\address{Sobolev Institute of Mathematics\\
4 Acad. Koptyug Ave., Novosibirsk, 630090, Russia\\
and Novosibirsk State University\\
2 Pirogova St., Novosibirsk, 630090, Russia}
\email{ospichev@math.nsc.ru}

\author{Mars Yamaleev}

\address{Kazan Federal University\\
18 Kremlevskaya St., Kazan, 420008, Russia}
\email{mars.yamaleev@kpfu.ru}

\thanks{The work of N.~Bazhenov was supported by Nazarbayev University Faculty Development Competitive Research Grants N090118FD5342. The work of S.~Ospichev was funded by RFBR according to the research project No.~17-01-00247. The work of M.~Yamaleev was supported by Russian Science Foundation, project No.~18-11-00028.}

\begin{abstract}
	A numbering of a countable family $S$ is a surjective map from the set of natural numbers $\omega$ onto $S$. A numbering $\nu$ is reducible to a numbering $\mu$ if there is an effective procedure which given a $\nu$-index of an object from $S$, computes a $\mu$-index for the same object. The reducibility between numberings gives rise to a class of upper semilattices, which are usually called Rogers semilattices. The paper studies Rogers semilattices for families $S \subset P(\omega)$ belonging to various levels of the analytical hierarchy. We prove that for any non-zero natural numbers $m\neq n$, any non-trivial Rogers semilattice of a $\Pi^1_m$-computable family cannot be isomorphic to a Rogers semilattice of a $\Pi^1_n$-computable family. One of the key ingredients of the proof is an application of the result by Downey and Knight on degree spectra of linear orders.
\end{abstract}


\maketitle

\section{Introduction}

Uniform computations for families of mathematical objects constitute a classical line of research in recursion theory. An important approach to studying such computations is provided by the theory of numberings. The theory goes back to the ground-breaking work of G{\"o}del \cite{Goedel}, where an effective numbering of first-order formulae was used in the proof of the incompleteness theorems. One of the first results in the theory of computable numberings was obtained by Kleene~\cite{Kleene}: he gave a construction of a universal partial computable function. After that, the foundations of the modern theory of numberings were developed by Kolmogorov and Uspenskii~\cite{KU,Usp-55} and, independently, by Rogers~\cite{Rogers}.

Let $\mathcal{S}$ be a countable set. A \emph{numbering} of $\mathcal{S}$ is a surjective map $\nu$ from the set of natural numbers $\omega$ onto $\mathcal{S}$. A standard tool for comparing the complexity of different numberings is provided by the notion of \emph{reducibility} between numberings: A numbering $\nu$ is \emph{reducible} to another numbering $\mu$, denoted by $\nu\leq \mu$, if there is total computable function $f(x)$ such that $\nu(x) = \mu(f(x))$ for all $x\in\omega$. In other words, there is an effective procedure which, given a $\nu$-index of an object from $\mathcal{S}$, computes a $\mu$-index for the same object.

In this paper, we consider only families $\mathcal{S}$ containing subsets of $\omega$, i.e., we always assume that $\mathcal{S} \subseteq P(\omega)$ and $\mathcal{S}$ is countable.

Suppose that $\mathcal{S}$ is a family of computably enumerable (c.e.) sets. A numbering $\nu$ of the family $\mathcal{S}$ is \emph{computable} if the set
\begin{equation} \label{equ:g-nu}
	G_{\nu} = \{ \langle n,x\rangle \,\colon x\in \nu(n)\}
\end{equation}
is c.e. A family $\mathcal{S}$ is \emph{computable} if it has a computable numbering.

In a standard recursion-theoretic way, the notion of reducibility between numberings gives rise to the \emph{Rogers upper semilattice} (or \emph{Rogers semilattice} for short) of a family: For a computable family $\mathcal{S}$, this semilattice contains the degrees of all computable numberings of $\mathcal{S}$. As per usual, here two numberings have the same degree if they are reducible to each other.

There is a large body of literature on Rogers semilattices of computable families. To name only a few, computable numberings were studied by Badaev~\cite{Bad-77,Bad-94}, Ershov~\cite{Ershov-68,Ershov-Book}, Friedberg~\cite{Friedberg}, Goncharov~\cite{Gon-80,Gon-83}, Khutoretskii~\cite{Khut-71}, Lachlan~\cite{Lach-64,Lach-65}, Mal'tsev~\cite{Mal-65}, Pour-El~\cite{Pour-El}, Selivanov~\cite{Sel-76}, and many other researchers.

Goncharov and Sorbi~\cite{GS-97} started developing the theory of generalized computable numberings. One of their approaches to generalized computations can be summarized as follows. Let $\Gamma$ be a complexity class (e.g., $\Sigma^0_1$, $d$-$\Sigma^0_1$, $\Sigma^0_n$, or $\Pi^1_n$). A numbering $\nu$ of a family $\mathcal{S}$ is \emph{$\Gamma$-computable} if the set $G_{\nu}$ from \eqref{equ:g-nu} belongs to the class $\Gamma$. We say that a family $\mathcal{S}$ is \emph{$\Gamma$-computable} if it has a $\Gamma$-computable numbering. Note that the classical notion of a computable numbering becomes a synonym of a $\Sigma^0_1$-computable numbering.

In a similar way to computable numberings, one can introduce the notion of the Rogers semilattice of a $\Gamma$-computable family, see Section~\ref{subsect:numb} for the details. One of natural questions in the area of generalized computable numberings can be formulated as follows.

\begin{problem} \label{problem:main}
	Given a complexity class $\Gamma$, study the isomorphism types of Rogers semilattices of $\Gamma$-computable families.
\end{problem}

We give a short overview of some results related to Problem~\ref{problem:main}, further related work is discussed in Section~\ref{subsect:related}. We say that an upper semilattice is a \emph{Rogers $\Gamma$-semilattice} if it is isomorphic to the Rogers semilattice of a $\Gamma$-computable family of sets. A semilattice is \emph{non-trivial} if it contains more than one element.

Let $n$ be a non-zero natural number. Badaev, Goncharov, and Sorbi~\cite{BGS-06} proved that for any $m \geq n + 3$, any non-trivial Rogers $\Sigma^0_m$-semilattice is not isomorphic to a Rogers $\Sigma^0_n$-semilattice. Podzorov~\cite{Podz} obtained a generalization of this theorem: he showed that a similar result holds for $m \geq n + 2$. It is still open whether this fact is true for $m=n+1$.

Badaev and Goncharov~\cite{BG-08} extended the result of~\cite{BGS-06} to the hyperarithmetical hierarchy: They showed that for any computable ordinals $\alpha > 0$ and $\beta\geq \alpha+3$, any non-trivial Rogers $\Sigma^0_{\beta}$-semilattice is not isomorphic to a Rogers $\Sigma^0_{\alpha}$-semilattice.

We note that the situation in the Ershov hierarchy is quite the opposite. Recall that for a non-zero natural number $n$, $\Sigma^{-1}_n$ denotes the class of all $n$-c.e. sets. The result of Herbert, Jain, Lempp, Mustafa, and Stephan (Theorem~2 in~\cite{HJLMS}) implies that every Rogers $\Sigma^{-1}_n$-semilattice is also a Rogers $\Sigma^{-1}_{n+1}$-semilattice.

In this paper, we continue the investigations of \cite{BGS-06,Podz,BG-08} in the setting of the analytical hierarchy. We prove that for any natural numbers $m>n\geq 1$, a non-trivial Rogers $\Pi^1_m$-semilattice is not isomorphic to a Rogers $\Pi^1_n$-semilattice (Theorem~\ref{theo:main}). Quite unexpectedly, one of the main ingredients of the proof is an application of the result of Downey and Knight~\cite{DK-92} on degree spectra of linear orders.

The structure of the paper is as follows. Section~\ref{sect:prelim} contains the necessary preliminaries and a discussion of related work. In Section~\ref{sect:main-res}, we prove Theorem~\ref{theo:main}: First, we give a complete outline of the proof including all key ideas. After that, the proofs of the auxiliary claims are given in separate subsections.


\section{Preliminaries} \label{sect:prelim}

We use lowercase bold Latin letters (e.g., $\mathbf{a}$, $\mathbf{b}$, $\mathbf{c}$) to denote $m$-deg\-rees. Lowercase bold Latin letters with a subscript $T$ (e.g., $\mathbf{x}_T$, $\mathbf{y}_T$, $\mathbf{z}_T$) denote Turing degrees.

We assume that for any considered countable structure, its universe is contained in $\omega$. For a structure $\mathcal{A}$, $D(\mathcal{A})$ denotes the atomic diagram of $\mathcal{A}$.

We treat upper semilattices as structures in the language $L_{usl} = \{ \leq, \vee\}$. If $\mathcal{A}$ is an upper semilattice and $a\leq_{\mathcal{A}} b$ are elements from $\mathcal{A}$, then the \emph{interval} $[a;b]_{\mathcal{A}}$ is the semilattice
\[
	[a;b]_{\mathcal{A}} := ( \{ c\in \mathcal{A} \,\colon a \leq_{\mathcal{A}} c \leq_{\mathcal{A}} b\}; \leq_{\mathcal{A}}, \vee_{\mathcal{A}} ).
\]

For a complexity class $\Gamma$, we say that an $L_{usl}$-structure $\mathcal{A}=(\omega;\leq_A,\vee_A)$ is a \emph{$\Gamma$-presentation} of an upper semilattice $\mathcal{M}$ if $\mathcal{A}$ satisfies the following conditions:
\begin{enumerate}
	\item the function $\vee_A$ is total computable,
	
	\item the relation $\sim_A \,:=\, \leq_A \, \cap\, \geq_A$ is a congruence on $\mathcal{A}$,
	
	\item the relations $\leq_{A}$ and $\sim_A$ both belong to the class $\Gamma$,
	
	\item the quotient structure $\mathcal{A}/_{\displaystyle{\sim_A}}$ is isomorphic to $\mathcal{M}$.
\end{enumerate}


\subsection{Numberings} \label{subsect:numb}

Suppose that $\nu$ is a numbering of a family $\mathcal{S}_0$, and $\mu$ is a numbering of a family $\mathcal{S}_1$. Note that the condition $\nu \leq \mu$ always implies that $\mathcal{S}_0 \subseteq \mathcal{S}_1$.

Numberings $\nu$ and $\mu$ are \emph{equivalent} (denoted by $\nu\equiv \mu$) if $\nu\leq \mu$ and $\mu \leq \nu$. The numbering $\nu\oplus\mu$ of the family $\mathcal{S}_0\cup\mathcal{S}_1$ is defined as follows:
\[
	(\nu \oplus \mu)(2x) = \nu(x), \quad (\nu \oplus \mu)(2x+1) = \mu(x).
\]
The following fact is well-known (see, e.g., p.~36 in \cite{Ershov-Book}): If $\xi$ is a numbering of a family $\mathcal{T}$, then
\[
	(\nu \leq  \xi \,\&\, \mu \leq  \xi) \ \Leftrightarrow\ (\nu\oplus \mu \leq  \xi).
\]
For further background on numberings, the reader is referred to, e.g., \cite{Ershov-Book,Ershov-99,BG-00}.

Let $\Gamma$ be a complexity class with the following properties:
\begin{itemize}
	\item[(a)] If $\nu$ is a $\Gamma$-computable numbering and $\mu$ is a numbering such that $\mu \leq \nu$, then $\mu$ is also $\Gamma$-computable.
	
	\item[(b)] If numberings $\nu$ and $\mu$ are both $\Gamma$-computable, then the numbering $\nu\oplus \mu$ is also $\Gamma$-computable.
\end{itemize}
For example, it is not hard to show that for any non-zero natural number $n$, each of the classes $\Sigma^0_n$, $\Sigma^{-1}_n$, and $\Pi^1_n$ has these properties.

Consider a $\Gamma$-computable family $\mathcal{S}$. By $Com_{\Gamma}(\mathcal{S})$ we denote the set of all $\Gamma$-computable numberings of $\mathcal{S}$. Since the relation $\equiv$ is a congruence on the structure $(Com_{\Gamma}(\mathcal{S}); \leq, \oplus)$, we use the same symbols $\leq$ and $\oplus$ on numberings of $\mathcal{S}$ and on $\equiv$-equivalence classes of these numberings.

The quotient structure $\mathcal{R}_{\Gamma}(\mathcal{S}) := (Com_{\Gamma}(\mathcal{S}) /_{\displaystyle{\equiv}}; \leq, \oplus)$ is an upper semilattice. We say that $\mathcal{R}_{\Gamma}(\mathcal{S})$ is the \emph{Rogers semilattice} of the $\Gamma$-computable family $\mathcal{S}$. For the sake of convenience, we use the following notation:
\[
	\mathcal{R}^1_n(\mathcal{S}) := \mathcal{R}_{\Pi^1_n}(\mathcal{S}).
\]

\subsection{Related work} \label{subsect:related}

The questions related to Problem~\ref{problem:main} were extensively studied for Rogers $\Sigma^0_n$-semilattices. In particular, the following results on the number of isomorphism types of Rogers $\Sigma^0_n$-semilattices are known. Ershov and Lavrov~\cite{EL-73} (see also p.~72 in~\cite{Ershov-Book}, and \cite{Ershov-03}) showed that there are finite families $\mathcal{S}_i$, $i\in\omega$, of c.e. sets such that the semilattices $\mathcal{R}_{\Sigma^0_1}(\mathcal{S}_i)$ are pairwise non-isomorphic. In other words, there are infinitely many isomorphism types of Rogers $\Sigma^0_1$-semilattices. V'yugin \cite{V'yugin} proved that there are infinitely many pairwise elementarily non-equivalent Rogers $\Sigma^0_1$-semilattices. Badaev, Goncharov, and Sorbi~\cite{BGS-05} proved that for any natural number $n\geq 2$, there are infinitely many pairwise elementarily non-equivalent Rogers $\Sigma^0_n$-semilattices. The reader is referred to, e.g., \cite{GS-97,BGPS-03,BGS-03,BMY} for further results on Rogers $\Sigma^0_n$-semilattices.

Rogers semilattices in the analytical hierarchy were previously studied by Dorzhieva~\cite{Dor-14,Dor-16}. In particular, she showed that for any non-zero $n$ and any non-trivial Rogers $\Pi^1_n$-semilattice $\mathcal{R}$, the first-order theory of $\mathcal{R}$ is hereditarily undecidable (Theorem~3 in \cite{Dor-16}).

Kalimullin, Puzarenko, and Faizrakhmanov~\cite{KPF-18} considered \emph{computable $\Pi^1_1$-numberings}. A \emph{$\Pi^1_1$-numbering} of a family $\mathcal{S}$ is a \emph{partial} map $\nu$ acting from $\omega$ onto $\mathcal{S}$ such that the domain of $\nu$ is enumeration reducible to the $\Pi^1_1$-complete set $\mathcal{O}$. A $\Pi^1_1$-numbering $\nu$ is \emph{computable} if the set
\[
	G^{\ast}_{\nu} = \{ \langle n,x\rangle \,\colon n\in dom(\nu),\ x\in \nu(n)\}
\]
is enumeration reducible to $\mathcal{O}$.


\section{Main Result} \label{sect:main-res}

\begin{theorem} \label{theo:main}
	Let $m > n$ be non-zero natural numbers. If $\mathcal{R}$ is a non-tri\-vi\-al Rogers $\Pi^1_m$-semilattice, then $\mathcal{R}$ is not isomorphic to a Rogers $\Pi^1_n$-se\-mi\-lat\-tice.
\end{theorem}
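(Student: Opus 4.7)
Assume for contradiction that $\Psi \colon \mathcal{R}^1_m(\mathcal{S}) \to \mathcal{R}^1_n(\mathcal{T})$ is a semilattice isomorphism between two non-trivial Rogers semilattices with $1 \leq n < m$. Enumerating indices of $\Pi^1_n$-computable numberings of $\mathcal{T}$, together with reducibility and $\oplus$, yields a natural $\Pi^1_n$-presentation $\mathcal{B}$ of $\mathcal{R}^1_n(\mathcal{T})$ in the sense of Section~\ref{sect:prelim}. Transporting $\mathcal{B}$ through $\Psi^{-1}$ produces a $\Pi^1_n$-presentation $\mathcal{A}$ of $\mathcal{R}^1_m(\mathcal{S})$. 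The whole task thus reduces to showing that no non-trivial $\mathcal{R}^1_m(\mathcal{S})$ can be $\Pi^1_n$-presented.

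The second step is to code arbitrary $\Pi^1_m$-sets into the Rogers semilattice in the spirit of the Goncharov--Sorbi and Badaev--Goncharov--Sorbi constructions. Fix a $\Pi^1_m$-computable numbering $\nu_0$ of $\mathcal{S}$ and two distinct elements $A, B \in \mathcal{S}$. For each $X \subseteq \omega$, let the auxiliary numbering of $\{A,B\}$ take value $A$ on $X$ and $B$ on its complement; combining it with $\nu_0$ via $\oplus$ gives a numbering $\nu_X$ which is $\Pi^1_m$-computable whenever $X$ is $\Pi^1_m$, and whose position inside $\mathcal{R}^1_m(\mathcal{S})$ (for instance, properties of the interval $[[\nu_0],[\nu_X]]$ or of the principal ideal below $[\nu_X]$) depends on $X$ in an isomorphism-invariant way.

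The third and central step uses the Downey--Knight theorem. Rather than trying to code a $\Pi^1_m$-complete set directly into $\mathcal{A}$ (which would only give a gap growing with $m-n$, as in the arithmetical arguments of \cite{BGS-06,Podz}), I would code a linear ordering $\mathcal{L}$ chosen so that it admits a $\Pi^1_m$-presentation but its degree spectrum rules out any $\Pi^1_n$-presentation. Specifically, select an indexed family $(X_\ell)_{\ell \in \omega}$ of $\Pi^1_m$-sets with $\ell \mapsto X_\ell$ uniformly $\Pi^1_m$, such that the order on $\mathcal{L}$ is readable from the relative position of the degrees $[\nu_{X_\ell}]$ by a fixed first-order $L_{usl}$-formula. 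Then the $\Pi^1_n$-presentation $\mathcal{A}$ together with this definable decoding yields a $\Pi^1_n$ copy of $\mathcal{L}$, contradicting the Downey--Knight choice.

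The principal obstacle will be the \emph{rigidity} of the coding. The quantity extracted from $[\nu_X]$ must be purely $L_{usl}$-definable so that $\Psi$ preserves it, yet sensitive enough to recover $X$ without loss of descriptive complexity; moreover the whole assignment $X \mapsto \nu_X$ must be uniformly $\Pi^1_m$ so the embedding of $\mathcal{L}$ is faithful. Realising this will require adapting the Khutoretski\u{\i}-style density and minimality lemmas used in~\cite{BGS-06,Podz,BG-08} to the analytical setting, and verifying that the decoding formula survives the quotient by $\sim_A$ in the $\Pi^1_n$-presentation. The reason a gap of just $1$ can suffice here, while the arithmetical analogues require gaps of $2$ or $3$, is precisely that the Downey--Knight theorem supplies a sharp level-by-level separation of presentation complexities that absorbs the loss incurred by the coding.
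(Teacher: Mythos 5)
Your high-level strategy---use Downey--Knight to pick a linear order whose degree spectrum separates the two levels, embed it invariantly into the Rogers semilattice, and read it back off a low-complexity presentation---is indeed the germ of the paper's argument, but two steps of the proposal are genuinely broken as stated. First, the ``natural $\Pi^1_n$-presentation'' of the whole semilattice $\mathcal{R}^1_n(\mathcal{T})$ does not exist at that complexity and nothing in your sketch produces it: recognizing that an index codes a numbering of exactly the family $\mathcal{T}$ (surjectivity onto $\mathcal{T}$) and deciding reducibility between numberings both quantify over equality of $\Pi^1_n$ sets, so the order relation is at best $\Sigma^0_2$ relative to the $\Sigma^1_n$-complete set $\mathcal{E}_n$, with the surjectivity condition adding further quantifiers on top. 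The paper sidesteps this entirely by presenting only \emph{intervals} $[a;b]$ via the operator $W\mapsto\Psi(\nu;U\cup W)$, which parameterizes the ideal below a fixed numbering by c.e.\ sets and never has to recognize which numberings enumerate all of $\mathcal{T}$; the resulting bound (Proposition~\ref{prop:upper-bound}) is a $\Sigma^0_2(\mathcal{E}_n)$-presentation of the interval. Relatedly, your final decoding step---extracting a $\Pi^1_n$ copy of $\mathcal{L}$ from a presentation via a fixed first-order $L_{usl}$-formula---fails, because evaluating a first-order formula over a $\Gamma$-presented structure raises the complexity by a quantifier level per alternation; the paper avoids this by arranging that $\mathcal{L}$ occurs as an honest interval, so its ordering is the atomic relation $\leq$ itself and no definable decoding is needed.

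Second, the coding step is asserted rather than proved: the claim that the position of $[\nu_X]$ ``depends on $X$ in an isomorphism-invariant way'' is exactly the hard content, and the two-element auxiliary numbering $\nu_X$ by itself yields no such invariant. The paper's mechanism is concrete and different: Podzorov's initial-segment theorem, transferred to $\Pi^1_k$ (Proposition~\ref{prop:init-segments}), realizes the ideal $\mathbf{D}_m(\leq U)$ of $m$-degrees below an immune $\Delta^1_k$ set $U$ as a principal ideal of the Rogers semilattice, and a separate Lachlan-style construction (Proposition~\ref{prop:immune}) produces an immune $A$ with $\mathbf{D}_m(\leq A)\cong\mathcal{L}$ and $A$ c.e.\ in $D(\mathcal{L})\oplus\emptyset^{(2)}$. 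That explicit complexity control on $A$ is what keeps finitely many Turing jumps of $\mathcal{E}_n$ inside $\Delta^1_m$ and is the actual reason a gap of $1$ suffices; your closing intuition about absorbing the loss is right in spirit, but without analogues of these two propositions the proposal has neither a verified isomorphism invariant that survives $\Psi$ nor the complexity bookkeeping needed to invoke Downey--Knight.
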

\begin{proof}
	Here we give a complete outline of the proof. This includes all of its main ingredients, but the proofs of some auxiliary statements are given in separate subsections.
	
	First, we give an upper bound on the complexity of (atomic diagrams of) arbitrary intervals in a Rogers $\Pi^1_n$-semilattice. Let $\mathcal{E}_n$ be the $\Sigma^1_n$-complete set.
	
	\begin{proposition}\label{prop:upper-bound}
		Let $\mathcal{M}$ be a Rogers $\Pi^1_n$-semilattice. If $a\leq_{\mathcal{M}} b$ are elements from $\mathcal{M}$, then the interval $[a;b]_{\mathcal{M}}$ has a $\Sigma^0_2(\mathcal{E}_n)$-presentation.
	\end{proposition}

Second, we obtain a family of upper semilattices which are realizable as intervals in every non-trivial Rogers $\Pi^1_k$-semilattice.

Let $\Dm$ denote the semilattice of $m$-degrees. As per usual, we assume that $\Dm$ does not contain the degrees $\deg_m(\emptyset)$ and $\deg_m(\omega)$. For a non-empty set $A\subset \omega$, let $\Dm(\leq A)$ be the principal ideal $\{ \mathbf{d}\,\colon \mathbf{d} \leq \deg_m(A)\}$ in the semilattice $\Dm$.

Suppose that $\nu$ and $\mu$ are $\Pi^1_k$-computable numberings of a family $\mathcal{S}$. We say that $\nu$ is \emph{$\Delta^1_k$-reducible} to $\mu$, denoted by $\nu \leq_{\Delta^1_k} \mu$, if there is a total $\Delta^1_k$ function $f(x)$ such that $\nu(x) = \mu(f(x))$, for all $x\in\omega$. The numberings $\nu$ and $\mu$ are \emph{$\Delta^1_k$-equivalent}, denoted by $\nu \equiv_{\Delta^1_k} \mu$, if $\nu\leq_{\Delta^1_k} \mu$ and $\mu\leq_{\Delta^1_k} \nu$.

\begin{proposition}\label{prop:init-segments}
	Let $k$ be a non-zero natural number, and $\mathcal{S}$ be a $\Pi^1_k$-com\-pu\-table family such that $\mathcal{S}$ contains at least two elements. Suppose that a set $U\subseteq\omega$ is immune and $\Delta^1_k$. Then for any $\Pi^1_k$-computable numbering $\alpha$ of the family $\mathcal{S}$, there is a $\Pi^1_k$-computable numbering $\beta$ of $\mathcal{S}$ with the following properties:
	\begin{itemize}
		\item[(a)] $\beta \equiv_{\Delta^1_k} \alpha$.
		
		\item[(b)] If $\mathcal{S}$ is finite, then the principal ideal induced by $\beta$ inside $\mathcal{R}^1_k(\mathcal{S})$ (we denote this ideal by $[\widehat \beta]^1_k$) is isomorphic to the structure $\Dm(\leq U)$.
		
		\item[(c)] If $\mathcal{S}$ is infinite, then the ideal $[\widehat\beta]^1_k$ is isomorphic to the structure $\Dm(\leq U)\setminus\{\mathbf{0}\}$, i.e. to the ideal $\Dm(\leq U)$ with the least element omitted.
	\end{itemize}
\end{proposition}

Proposition~\ref{prop:init-segments} allows us to find plenty of linearly ordered intervals inside an arbitrary non-trivial Rogers $\Pi^1_k$-semilattice. This is obtained via the following result:

\begin{proposition}\label{prop:immune}
	Suppose that $\mathcal{L}$ is a countable linear order with least and greatest elements. Then there exists a set $A \subseteq \omega$ with the following properties:
	\begin{enumerate}
		\item the principal ideal $\Dm(\leq A)$ is isomorphic to $\mathcal{L}$,
		
		\item $A$ is immune, and
		
		\item $A$ is c.e. in $(D(\mathcal{L}) \oplus \emptyset^{(2)})$.
	\end{enumerate}
\end{proposition}

Now we are ready to introduce the last key ingredient, and to finish the proof. The last ingredient uses the result of Downey and Knight~\cite{DK-92} on the degree spectra of linear orders.

Suppose that $\mathcal{M}$ is a countable infinite structure, and $\alpha$ is a computable ordinal. A Turing degree $\mathbf{x}_T$ is the \emph{$\alpha$th jump degree} of $\mathcal{M}$ if it is the least degree in the set
\[
	\{ \deg_T(D(\mathcal{A}))^{(\alpha)}\,\colon \mathcal{A}\cong \mathcal{M}, \text{ and the domain of } \mathcal{A} \text{ is } \omega  \}.
\]
A degree $\mathbf{x}_T$ is \emph{proper $\alpha$th jump degree} of $\mathcal{M}$ if $\mathbf{x}_T$ is the $\alpha$th jump degree of $\mathcal{M}$ and for any computable ordinal $\beta < \alpha$, $\mathcal{M}$ has no $\beta$th jump degree .

\begin{theorem} \label{theo:DK}
	\textnormal{(Downey and Knight~\cite{DK-92}, see also \cite{Kni86,AJK90})}
	For any computable ordinal $\alpha \geq 2$ and any Turing degree $\mathbf{x}_T\geq \mathbf{0}^{(\alpha)}$, there is a linear order having proper $\alpha$th jump degree $\mathbf{x}_T$.
\end{theorem}

\begin{corollary}\label{corol:01}
	For any Turing degree $\mathbf{x}_T$, there exists a linear order $\mathcal{L}$ with the following properties: $\deg_T(D(\mathcal{L})) \leq \textbf{x}_T^{(3)}$, and there is no copy $\mathcal{A}\cong\mathcal{L}$ with $\deg_T(D(\mathcal{A}))\leq \mathbf{x}_T$.
\end{corollary}

In Corollary~\ref{corol:01}, the desired structure $\mathcal{L}$ is the linear order from Theorem~\ref{theo:DK} having proper second jump degree $\mathbf{x}_T^{(3)}$.

Recall that $m>n\geq 1$. Towards contradiction, assume that $\mathcal{R}$ is a non-trivial Rogers $\Pi^1_m$-semilattice which is also a Rogers $\Pi^1_n$-semilattice.

Let $\mathbf{e}_{T}$ be the Turing degree of the set $\mathcal{E}_n$. By applying Corollary~\ref{corol:01} to the degree $\mathbf{x}_T = \mathbf{e}^{(2)}_T$, one can choose a linear order $\mathcal{L}$ such that $\mathcal{L}$ has least and greatest elements, $\deg_T(D(\mathcal{L})) \leq \mathbf{e}^{(5)}_T$, and there is no copy $\mathcal{A}$ of $\mathcal{L}$ with $\deg_T(D(\mathcal{A}))\leq \mathbf{e}^{(2)}_T$.

By Proposition~\ref{prop:immune}, one can find an immune set $A\subset\omega$ such that $\Dm(\leq A) \cong \mathcal{L}$ and $A$ is c.e. in $(D(\mathcal{L})\oplus \mathbf{0}^{(2)})$. In particular, this implies that the set $A$ is $\Sigma^0_1(\mathbf{e}_T^{(5)})$. Since the set $\mathcal{E}_n$ is $\Delta^1_m$, the set $A$ is also $\Delta^1_m$.

Since the Rogers $\Pi^1_m$-semilattice $\mathcal{R}$ is non-trivial, the corresponding $\Pi^1_m$-com\-putable family $\mathcal{S}$ contains more than one element. We apply Proposition~\ref{prop:init-segments} to $\mathcal{S}$ and the immune $\Delta^1_m$ set $A$, and find an interval $[a;b]_{\mathcal{R}}$ which is isomorphic either to $\mathcal{L}$, or to the structure $\mathcal{L}\setminus\{ 0\}$ (i.e. the order $\mathcal{L}$ with the least element omitted). W.l.o.g., one may assume that $[a;b]_{\mathcal{R}}$ is a copy of $\mathcal{L}$.

Recall that we assumed that $\mathcal{R}$ is a Rogers $\Pi^1_n$-semilattice. Thus, by Proposition~\ref{prop:upper-bound}, the interval $[a;b]_{\mathcal{R}}$ has a $\Sigma^0_2(\mathcal{E}_n)$-presentation $\mathcal{P}$. Using the presentation $\mathcal{P}$, it is straightforward to build a linear order $\mathcal{A}_0\cong\mathcal{L}$ such that $D(\mathcal{A}_0) \leq_T \mathbf{e}_T^{(2)}$. This contradicts the choice of the order $\mathcal{L}$. Therefore, $\mathcal{R}$ cannot be a Rogers $\Pi^1_n$-semilattice.

This concludes the (outline of the) proof of Theorem~\ref{theo:main}. The proofs of Propositions \ref{prop:upper-bound}--\ref{prop:immune} are given below.
\end{proof}


\subsection{Proof of Proposition~\ref{prop:upper-bound}}

We follow the lines of the proof of Lemma~7 in \cite{Podz}. For the sake of self-com\-pleteness, here we give a detailed exposition.

Let $\nu$ be a numbering of a family $\mathcal{S}$. The operator
\[
	\Psi(\nu; \cdot) \colon \{ W \subseteq \omega\,\colon W \text{ is c.e.},\ W\neq \emptyset\} \to \{ [\mu]_{\equiv}\,\colon \mu \leq \nu \}
\]
is defined as follows. Let $W$ be a non-empty c.e. set. Choose a total computable function $f(x)$ such that $range(f)=W$, and define $\Psi(\nu;W)$ as the $\equiv$-class of the numbering $\nu\circ f$. It is not hard to show that the operator $\Psi$ is well-defined, i.e. the value $\Psi(\nu;W)$ does not depend on the choice of a function $f$. Moreover, if $\mu$ is a numbering from the class $\Psi(\nu;W)$, then $range(\mu) = \nu(W) = \{ \nu(x)\,\colon x\in W\}$.

The operator $\Psi$ has the following properties (see p.~1120 in~\cite{Podz}):
\begin{itemize}
	\item[(a)] For any numbering $\mu$, the condition $\mu \leq \nu$ holds iff there is a c.e. set $W$ with $[\mu]_{\equiv} = \Psi(\nu;W)$.
	
	\item[(b)] For any non-empty c.e. sets $U$ and $V$, we have $\Psi(\nu;U\cup V) = \Psi(\nu;U) \vee \Psi(\nu;V)$.
	
	\item[(c)] $\Psi(\nu;U) \leq \Psi(\nu;V)$ if and only if $\nu(U) \subseteq \nu(V)$ and there is a partial computable function $\theta(x)$ such that $U\subseteq dom(\theta)$, $\theta(U)\subseteq V$, and for every $x\in U$, we have $\nu(x)=\nu(\theta(x))$.
\end{itemize}

Recall that $\mathcal{M}$ is a Rogers $\Pi^1_n$-semilattice. Hence, one can choose a $\Pi^1_n$-com\-pu\-table family $\mathcal{S}$, and identify an arbitrary element $c\in\mathcal{M}$ with a class $[\xi]_{\equiv}$, where $\xi$ is a $\Pi^1_n$-computable numbering of the family $\mathcal{S}$.

Choose a numbering $\nu$ of $\mathcal{S}$ such that $b=[\nu]_{\equiv}$. By the property~(a) of the operator $\Psi$, there is a non-empty c.e. set $U$ such that $a=\Psi(\nu;U)$. We define an $L_{usl}$-structure $\mathcal{A} = (\omega;\leq_A,\vee_A)$ as follows:
\begin{enumerate}
	\item $\vee_A$ is a total computable function such that $W_{x\vee_A y} = W_x \cup W_y$ for all $x$ and $y$;
	
	\item $x \leq_A y$ iff $\Psi(\nu; U\cup W_x) \leq \Psi(\nu; U\cup W_y)$.
\end{enumerate}

The properties of $\Psi$ imply that the relation $\sim_A\, := (\leq_A\, \cap \, \geq_A)$ is a congruence on $\mathcal{A}$, and the function $F\colon [e]_{\sim_A} \mapsto \Psi(\nu;U\cup W_e)$ is an isomorphism from $\mathcal{A}/_{\displaystyle \sim_{A}}$ onto $[a;b]_{\mathcal{M}}$.

In order to finish the proof, now it is sufficient to show that the relation $\leq_{A}$ is $\Sigma^0_{2}(\mathcal{E}_n)$. By the property~(c) of the operator $\Psi$, the condition $x\leq_A y$ is equivalent to
\begin{multline}
	(\exists i \in\omega) [ U\cup W_x \subseteq dom(\varphi_i) \ \&\  \varphi_i(U\cup W_x) \subseteq U\cup W_y \ \&\\
	(\forall z \in U\cup W_x)(\nu(z) = \nu(\varphi_i(z)))]. \label{equ:prop01}
\end{multline}
Since the condition $\nu(z) = \nu(v)$ can be rewritten as
\[
	(\forall k\in\omega)[ k\in \nu(z) \leftrightarrow k\in\nu(v) ]
\]
and $\nu$ is a $\Pi^1_n$-computable numbering, the equation~\eqref{equ:prop01} describes a $\Sigma^0_2(\mathcal{E}_n)$ condition. Therefore, the semilattice $\mathcal{M}$ has a $\Sigma^0_2(\mathcal{E}_n)$-presentation.
Proposition~\ref{prop:upper-bound} is proved.


\subsection{Proof sketch for Proposition~\ref{prop:init-segments}}

 The proof of this result is entirely based on Theorem 2 from the paper~\cite{Podz-03} by Podzorov.

Let $m$ be a non-zero natural number. A numbering $\nu$ is \emph{$\Delta^0_m$-reducible} to a numbering $\mu$, denoted by $\nu \leq_{\Delta^0_m} \mu$, if there is a total $\Delta^0_m$ function $f(x)$ such that $\nu(x) = \mu(f(x))$, for every $x$. We write $\nu \equiv_{\Delta^0_m} \mu$ if $\nu\leq_{\Delta^0_m} \mu$ and $\mu\leq_{\Delta^0_m} \nu$.

If $\nu$ is a $\Sigma^0_m$-computable numbering of a family $\mathcal{S}$, then by $[\widehat \nu]^0_m$ we denote the principal ideal induced by $\nu$ inside the Rogers $\Sigma^0_m$-semilattice of $\mathcal{S}$ (see \cite{Podz-03} for more details).

 \begin{theorem} \label{theo:Podzorov}
 \textnormal{(Podzorov~\cite{Podz-03})}
 Suppose that $m$ and $n$ are natural numbers such that $2\le m\le n$. Suppose also that a set $U\subseteq\omega$ is immune and $\Delta^0_m$. Let $\mathcal{S}$ be a $\Sigma^0_n$-computable family such that $\mathcal{S}$ contains at least two elements.   Then for any $\Sigma^0_n$-computable numbering $\alpha$ of the family $\mathcal{S}$, there is a $\Sigma^0_n$-computable numbering $\beta$ of $\mathcal{S}$ with the following properties:
 \begin{itemize}
 	\item[(a)] $\beta \equiv_{\Delta^0_m} \alpha$.
 	
 	\item[(b)] If $\mathcal{S}$ is finite, then $[\widehat \beta]^0_n$ is isomorphic to the structure $\Dm(\leq U)$.
 	
 	\item[(c)] If $\mathcal{S}$ is infinite, then the ideal $[\widehat\beta]^0_n$ is isomorphic to the structure $\Dm(\leq U)\setminus\{\mathbf{0}\}$.
 \end{itemize}
\end{theorem}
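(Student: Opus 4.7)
The plan is to construct $\beta$ by a coding construction that uses the set $U$ to parametrize the principal ideal below $\beta$ in the Rogers $\Sigma^0_n$-semilattice, so that this ideal mirrors $\Dm(\leq U)$. Since $\mathcal{S}$ has at least two elements, fix distinct $S_0,S_1\in\mathcal{S}$. I would define $\beta$ on pairs $\langle i,x\rangle$, using the stream $i=0$ to copy $\alpha$ (which guarantees surjectivity onto $\mathcal{S}$ and the reduction $\alpha\leq\beta$) and the stream $i=1$ as a ``coding stream'' by setting $\beta(\langle 1,x\rangle) = S_0$ if $x\in U$ and $\beta(\langle 1,x\rangle) = S_1$ otherwise. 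Because $U\in\Delta^0_m\subseteq\Sigma^0_n$ (using the hypothesis $2\leq m\leq n$), the numbering $\beta$ is $\Sigma^0_n$-computable, and because the coding of $U$ is uniformly $\Delta^0_m$, one checks directly that $\alpha\leq_{\Delta^0_m}\beta$ and $\beta\leq_{\Delta^0_m}\alpha$, which gives clause (a).

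To analyze $[\widehat\beta]^0_n$, I would use the operator $\Psi(\beta;\cdot)$ from the proof of Proposition~\ref{prop:upper-bound}, adapted to the $\Sigma^0_n$-setting by replacing ``c.e.'' with $\Sigma^0_n$; each $\mu\leq\beta$ then has the form $\Psi(\beta;W)$ for a $\Sigma^0_n$ set $W$ of $\beta$-indices. To such a $\mu$ I would attach its ``$U$-trace'' $T(\mu) := \{x : \langle 1,x\rangle\in W\}$. The central claim is that the Rogers reducibility $\mu\leq\mu'$ holds if and only if $T(\mu)\leq_m T(\mu')$ (modulo a finite symmetric difference), and that every $m$-degree $\leq\deg_m(U)$ is realized by some trace. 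The immunity of $U$ is decisive here: if $U$ contained an infinite c.e.\ set, a partial computable function (in the sense of property (c) of $\Psi$) could shuttle one trace onto another and thereby collapse distinct $m$-degrees to the same Rogers degree; immunity rules this out and forces the trace to be a complete invariant.

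The isomorphism is then given by $\mu\mapsto \deg_m(T(\mu))$; it is order-preserving and join-preserving because unions of $\Sigma^0_n$ sets correspond to joins both in the Rogers semilattice (via property (b) of $\Psi$) and in $\Dm$, and by the central claim it is injective and surjective onto $\Dm(\leq U)$. The dichotomy between clauses (b) and (c) appears at the bottom of the ideal: when $\mathcal{S}$ is finite, the computable $m$-degree $\mathbf{0}$ can be realized by a numbering that uses only finitely many values of the coding stream, while if $\mathcal{S}$ is infinite, any $\Sigma^0_n$-numbering of an infinite subfamily is forced to involve the coding stream in a way that pushes its trace to a non-computable $m$-degree, so $\mathbf{0}$ is absent from $[\widehat\beta]^0_n$.

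The main obstacle, and the heart of Podzorov's argument, is establishing the central claim in both directions. The easy direction realizes each $\deg_m(A)\leq\deg_m(U)$ as a trace by using a $\Delta^0_m$ reduction $A\leq_m U$ to steer the coding stream; the hard direction requires showing that \emph{every} Rogers reduction between two such numberings must be witnessed by an underlying $m$-reduction between their traces. I expect this converse to demand a careful injury-free argument that extracts an $m$-reduction from an arbitrary partial computable witness of property (c), using the immunity of $U$ at each step to discard ``accidental'' agreements that cover only finite portions of the trace.
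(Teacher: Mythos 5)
First, a framing remark: the paper does not prove Theorem~\ref{theo:Podzorov} at all --- it is quoted as Theorem~2 of Podzorov~\cite{Podz-03}, and the surrounding text only argues that Podzorov's construction relativizes to the analytical hierarchy. So your attempt must be judged against what the theorem actually demands, and there it has a fatal gap. By placing an unmodified copy of $\alpha$ on stream $i=0$ you guarantee $\alpha\leq\beta$ via the computable map $x\mapsto\langle 0,x\rangle$, hence $[\widehat\alpha]^0_n\subseteq[\widehat\beta]^0_n$; but the ideal below $\alpha$ is completely uncontrolled and in general much larger than $\Dm(\leq U)$. Concretely, let $\mathcal{S}=\{\emptyset,\omega\}$ and let $\alpha(k)=\omega$ iff $k\in K$ (the halting set); this is a $\Sigma^0_n$-computable numbering. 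For a two-element family a numbering $\mu$ of $\mathcal{S}$ is determined by the set $A_\mu=\mu^{-1}(\omega)$, and $\mu\leq\mu'$ iff $A_\mu\leq_m A_{\mu'}$; so with your $\beta$ (taking $S_0=\emptyset$, $S_1=\omega$) one computes $[\widehat\beta]^0_n\cong\Dm(\leq K\oplus\overline{U})$. This ideal contains $\deg_m(K)$ and, below it, a pair of $m$-incomparable c.e.\ degrees (e.g.\ a Friedberg--Muchnik pair, both $m$-reducible to $K$). On the other hand, immunity of $U$ forces every c.e.\ set $X\leq_m U$ to be computable: if $X=f^{-1}(U)$ then $f(X)$ is a c.e.\ subset of $U$, hence finite, and then $X=f^{-1}(f(X))$ is computable. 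So if $U$ is chosen so that $\Dm(\leq U)$ is a \emph{linear} order --- immune such $U\in\Delta^0_m$ exist, e.g.\ by Proposition~\ref{prop:immune} applied to a computable order (this already gives $U\in\Delta^0_m$ for $m\geq 4$, enough to refute the construction) --- then $[\widehat\beta]^0_n$ contains incomparable elements and cannot be isomorphic to $\Dm(\leq U)$. Your ``trace'' invariant collapses for exactly the same reason: it ignores stream $0$, which is where all the uncontrolled information lives.

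The idea you are missing is that the entire difficulty of the theorem is the tension between clause (a) and clauses (b),(c): $\beta$ must be $\Delta^0_m$-equivalent to $\alpha$, while under \emph{computable} reducibility almost nothing may remain below $\beta$ --- in particular $\alpha$ itself must, in general, \emph{not} be computably reducible to $\beta$. This is why Podzorov takes $\beta=\alpha\circ\pi$ for a suitably constructed $\Delta^0_m$ \emph{permutation} $\pi$ of $\omega$ (as the paper itself emphasizes in its sketch of Proposition~\ref{prop:init-segments}); the immunity of $U$ enters in building $\pi$ so as to destroy every computable reduction that would import extraneous $m$-degrees into the ideal, not merely to make a trace injective. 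A smaller but genuine secondary error: your adaptation of the operator $\Psi$, replacing c.e.\ index sets $W$ by $\Sigma^0_n$ ones, is wrong. Reducibility between $\Sigma^0_n$-computable numberings is still witnessed by total computable functions, so every $\mu\leq\beta$ is of the form $\Psi(\beta;W)$ with $W$ c.e.\ (the range of the reducing function); for a non-c.e.\ set $W$ the value $\Psi(\beta;W)$ is not even well defined as a numbering reducible to $\beta$.
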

 A careful step-by-step analysis of the proof of Theorem~\ref{theo:Podzorov} shows that this result works well for analytical sets:  Roughly speaking, the construction described in~\cite{Podz-03} depends only on the complexity of the set $U$, and the desired numbering $\beta$ can be considered as a $\Delta^0_m$ permutation (or in our case, as a $\Delta^1_k$ permutation) of the numbering $\alpha$. Thus, one can recover the proof of Proposition~\ref{prop:init-segments} from the Podzorov's construction just by using some basic properties of the analytical hierarchy and the Tarski--Kuratowski algorithm.



\subsection{Proof of Proposition~\ref{prop:immune}}

Before giving the proof, we briefly discuss some known related results. The results of Lachlan~\cite{Lach70} imply that for any countable linear order $\mathcal{L}$ with least element, there exists an initial segment of $\Dm$ isomorphic to $\mathcal{L}$. Ershov~\cite{Ershov75a} gave a characterization of the semilattice $\Dm$ up to isomorphism, as a $c$-uni\-ver\-sal upper semilattice of cardinality continuum, see also \cite{Ershov-Book,Pal75}.

Let $\IDm$ denote the upper semilattice of immune $m$-degrees. Following~\cite{Mal85}, here we assume that the $m$-degree of a non-empty finite set also belongs to $\IDm$. Using the characterization of Ershov~\cite{Ershov75a}, Mal'tsev~\cite{Mal85} proved that the structure $\IDm$ is isomorphic to $\Dm$. Therefore, one can obtain the following:

\begin{lemma} \label{lem:mal}
	\textnormal{(essentially Mal'tsev~\cite{Mal85})}
	For any countable linear order $\mathcal{L}$ with least element, there is an initial segment of $\IDm$ isomorphic to $\mathcal{L}$.
\end{lemma}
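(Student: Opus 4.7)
The plan is to derive this lemma as an immediate combination of the two results recalled just above: Lachlan's initial-segment theorem~\cite{Lach70} for $\Dm$, and Mal'tsev's isomorphism $\IDm \cong \Dm$~\cite{Mal85}. Given a countable linear order $\mathcal{L}$ with least element, I would first invoke Lachlan's theorem to produce an initial segment $\mathcal{I}$ of $\Dm$ with $\mathcal{I} \cong \mathcal{L}$, and then transport $\mathcal{I}$ through the Mal'tsev isomorphism $\Phi \colon \Dm \to \IDm$; the image $\Phi(\mathcal{I}) \subseteq \IDm$ is to be the desired initial segment.

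The only substantive thing to verify is that $\Phi(\mathcal{I})$ really is an initial segment of $\IDm$. This is a purely formal consequence of $\Phi$ being an order isomorphism: for any $\mathbf{a} \in \IDm$ with $\mathbf{a} \leq_m \Phi(\mathbf{x})$ for some $\mathbf{x} \in \mathcal{I}$, one has $\Phi^{-1}(\mathbf{a}) \leq_m \mathbf{x}$, and the downward-closure of $\mathcal{I}$ in $\Dm$ then gives $\Phi^{-1}(\mathbf{a}) \in \mathcal{I}$, whence $\mathbf{a} \in \Phi(\mathcal{I})$. The restriction of $\Phi$ to $\mathcal{I}$ then furnishes the required isomorphism $\Phi(\mathcal{I}) \cong \mathcal{I} \cong \mathcal{L}$.

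The main point requiring any care, and the closest thing to an obstacle, is matching the conventions at the bottom of the two semilattices. Recall that $\Dm$ excludes $\deg_m(\emptyset)$ and $\deg_m(\omega)$, while $\IDm$, following~\cite{Mal85}, admits the $m$-degrees of non-empty finite sets. One must read Mal'tsev's theorem as providing an isomorphism with both of these conventions in force, so that $\Phi$ is well-defined on the least element of $\Dm$ and no element of $\mathcal{I}$ is sent outside $\IDm$. Once this bookkeeping is in place, the proof reduces essentially to a one-line combination of Lachlan's and Mal'tsev's results, and I anticipate no further difficulty.
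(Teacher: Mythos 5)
Your proposal is correct and matches the paper's own derivation exactly: the paper states Lemma~\ref{lem:mal} as an immediate consequence of Lachlan's initial-segment theorem for $\Dm$ together with Mal'tsev's isomorphism $\IDm \cong \Dm$, with the same convention on non-empty finite sets in $\IDm$ that you flag. The transport argument you spell out (images of initial segments under an order isomorphism are initial segments) is the routine step the paper leaves implicit.
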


In a way, Proposition~\ref{prop:immune} can be treated as a refinement of Lemma~\ref{lem:mal}.

\begin{proof}
We follow the Odifreddi's exposition (see \S\,6.2 in \cite{Odi92}) of the results of Lachlan~\cite{Lach70}. Suppose that $U\subseteq \omega$. For a non-empty c.e. set $W$, choose a total computable function $f$ such that $range(f) = W$. Define
\[
	\lach_U(W) = f^{-1}(U).
\]
Note that in a way, the operator $\Psi(\nu;\cdot)$ from the proof of Proposition~\ref{prop:upper-bound} is a counterpart of the operator $\Psi_U$ in the realm of numberings.

We recall some basic properties of the operator $\Psi_U$ (see pp.~561--562 in~\cite{Odi92} and p.~388 in \cite{Ershov-Book}):
\begin{itemize}
	\item[(a)] The $m$-degree of the set $\Psi_U(W)$ does not depend on the choice of $f$.
	
	\item[(b)] $\lach_U(W) \leq_m U$.
	
	\item[(c)] If $W_1 \subseteq W_2$ are c.e. sets, then $\lach_U(W_1) \leq_m \lach_U(W_2)$.
	
	\item[(d)] If $W_1$ is a c.e. set and $W_1 =^{\ast} W_2$, then $\lach_U(W_1) \equiv_m \lach_U(W_2)$.
	
	\item[(e)] For any set $X \leq_m U$, there is a c.e. set $W_0$ such that $\lach_U (W_0) \equiv_m X$.
\end{itemize}
The properties above imply that $\lach_U$ induces an epimorphism of upper semilattices, acting from $\mathcal{E}^{\ast}$ onto the ideal $\Dm(\leq U)$. We slightly abuse the notation and identify $\lach_U$ with the induced epimorphism.

W.l.o.g., we may assume that $\mathcal{L}$ is an infinite linear order, and the domain of $\mathcal{L}$ is $\omega$. Moreover, we assume that $0$ is the least element of $\mathcal{L}$, and $1$ is the greatest element of $\mathcal{L}$. Fix a strongly $D(\mathcal{L})$-computable sequence of finite linear orders $\{ L_s\}_{s\in\omega}$ such that $L_0 = \{0 < 1\}$, $\bigcup_{s\in\omega} L_s = \mathcal{L}$, $L_s \subseteq L_{s+1}$, $L_{3t}=L_{3t+1}=L_{3t+2}$, and $L_{3t}$ contains exactly $(t+2)$ elements, for any $s$ and $t$. For an element $a\in \mathcal{L}$, let $l(a) = \min \{ s\,\colon a\in L_s\}$.

At a stage $s$, we construct infinite disjoint c.e. sets $P_{a,s}$, $a\in L_s$, and a computable equivalence relation $E_s$. We also build finite sets $A_s$ and $B_s$. For $a\in L_s$, let
\[
		R_{a,s} := \bigcup_{b\leq_{L_s} a} P_{b,s}.
\]
The sets have the following properties:
\begin{itemize}
	\item $A_s \subseteq A_{s+1}$, $B_s \subseteq B_{s+1}$, $E_s \subseteq E_{s+1}$, and $A_s \cap B_s = \emptyset$,
	
	\item for any $a\in L_s$, $R_{a,s} \subseteq R_{a,s+1}$,
	
	\item $R_{1,s} \cup A_s \cup B_s = \omega$ and $R_{1,s} \cap (A_s \cup B_s) = \emptyset$,
	
	\item for any $x\in\omega$, the equivalence class $[x]_{E_s}$ is finite; moreover, for any $s$, the canonical index of $[x]_{E_s}$ can be computed uniformly in $x$;
	
	\item every $x\in\omega$ satisfies exactly one of the following:
		\begin{itemize}
			\item there is $a\in L_s$ such that $[x]_{E_s} \subseteq P_{a,s}$,
			
			\item $[x]_{E_s} \subseteq A_s$, or
			
			\item $[x]_{E_s} \subseteq B_s$;
		\end{itemize}
		
	\item if $a\in L_s$ and $x\in P_{a,s+1}$, then there is an element $y\in P_{a,s} \cap [x]_{E_{s+1}}$.
\end{itemize}
Note that the last property implies the following: for any $a\in \mathcal{L}$ and $s\geq l(a)$, the set $P_{a,s} \cap P_{a,l(a)}$ is infinite. At stage $s+1$, if we do not explicitly specify $A_{s+1}$, then we assume that $A_{s+1} = A_s$. The same applies to other sets.


\

\textbf{Intuition.} Before giving a formal construction, we discuss
the intuition behind this construction.
Recall that we want to construct an immune set $A$ such that
$\Dm(\leq A)$ is isomorphic to $\mathcal{L}$. With help of the
functional $\Psi_A$ we transfer our problem to the semilattice
$\mathcal{E}^{\ast}$. Thus, we will build c.e. sets $R_{a,
l(a)}$ which will correspond to the elements $a \in \mathcal{L}$.
In order to obtain the desired isomorphism, we must ensure
that other c.e. sets
(i.e. sets $W$ not equal to our $R_{a,l(a)}$, $a\in\mathcal{L}$)
do not induce additional $m$-degrees via the functional $\Psi_A$.
The stages $3e+1$ and $3e+2$ are devoted to this, and
Lemma~\ref{lem:odi}
ensures the correctness.

Each of c.e. sets $R_{a, s}$ consists of smaller c.e. blocks
$P_{b,s}$, where $b \leq_{L_s} a$. While satisfying requirements, we
can move some elements of a block $P_{a,s}$ only to the left blocks
(i.e. the blocks $P_{b,s}$ with $b<_{\mathcal{L}} a$)
meanwhile keeping $P_{a,s}$ infinite. Note that if we keep each
$P_{a,s}$ infinite, then it helps to distinguish $R_{a,s}$ from
$R_{b,s}$, where $b <_{L_s} a$. The equivalence classes
$[x]_{E_s}$ constitute a tool for working with $m$-reductions between
$\Psi_A(W_e)$ and $\Psi_A(R_{a, s})$. Thus, moving an equivalence
class to the left block can be considered as a correction of the
reduction.

The role of stages $3e+3$ is ensuring immunity of $A$; also at these
stages we split a block $P_{b,s}$ into two blocks $P_{a,s}$ and
$P_{b,s}$, where $a$ is a fresh element in $L_s$.

We note that the construction is non-uniform in the following
sense.
At a stage $s$,
we don't have an effective procedure to determine which elements $a$ belong to
$L_s$.
Thus, one can consider our actions as working with all possible
approximations (or variants) of $L_s$, moreover, each of these approximations works with its
own $P_{a,s+1}$. Then for a given approximation, we can uniformly
construct $P_{a,s+1}$, and this allows us to make $P_{a,s+1}$ as c.e.
and $E_{s+1}$ as computable. One can imagine this process as
working on a tree, where at a given level we have the results of the
work at the corresponding stage with different approximations. Using the oracle
$D(\mathcal{L})\oplus \emptyset^{(2)}$, we can recognize the true
path, where the real construction happens, and thus, the sets $A$ and $B$
will be c.e. relative to the approximations along this path. Now
we proceed to the formal construction.

\


\textbf{Construction.}
At stage $0$, we set $A_0 = B_0 = \emptyset$, $P_{0,0} = 2 \mathbb{N}$, $P_{1,0} = 2 \mathbb{N} + 1$, and $E_0 = id_{\omega}$.

\underline{\emph{Stage $3e+1$.}} If the set $W_e$ is finite, then we proceed to the next stage. Assume that $W_e$ is infinite. Let $s = 3e$. We find the $\leq_{\mathcal{L}}$-greatest element $a\in L_{s}$ such that $W_e \cap P_{a,s}$ is infinite. For $b>_{L_s} a$, set $P_{b,s+1} = P_{b,s}$.

The sets $P_{b,s+1}$, $b\leq_{L_s} a$, and the relation $E_{s+1}$ are constructed as follows. At step $0$, set $P^0_{b} = P_{b,s}$, $b\leq_{L_s} a$, and $\tilde E^0 = E_s$. At an odd step $(t+1)$, find (some) elements $u\in W_e\cap P^t_{a}$ and $v\in P^t_a$ such that $[u]_{\tilde E^t} = [u]_{E_s}$, $[v]_{\tilde E^t} = [v]_{E_s}$, and $[u]_{E_s} \neq [v]_{E_s}$. Set $\tilde E^{t+1} = \tilde E^t \cup ([u]_{E_s} \cup [v]_{E_s})^2$ and $P^{t+1}_b = P^t_b$ for $b\leq_{L_s} a$.

At an even step $(t+1)$, find (some) elements $u\in W_e\cap P^t_{a}$ and $v\in \bigcup_{b\leq_{L_s} a} P^t_b$ such that $[u]_{\tilde E^t} = [u]_{E_s}$, $[v]_{\tilde E^t} = [v]_{E_s}$, $[u]_{E_s} \neq [v]_{E_s}$, and $[v]_{E_s}\cap W_{e,t} = \emptyset$. Suppose that $v\in P^t_c$. For $b\leq_{L_s} a$, define
\begin{gather*}
	\tilde E^{t+1} = \tilde E^t \cup ([u]_{E_s} \cup [v]_{E_s})^2,\\
	P^{t+1}_b =
	\begin{cases}
			P^t_a \setminus [u]_{E_s}, & \text{if~} b=a\neq c\\
			P^t_c \cup [u]_{E_s}, & \text{if~} b=c\neq a,\\
			P^t_b, & \text{otherwise}.
	\end{cases}
\end{gather*}
We set $E_{s+1} = \bigcup_{t\in\omega} \tilde E^t$ and $P_{b,s+1} = \lim_{t} P^t_b$.

It is not hard to show that the sets $P_{b,s+1}$, $b\neq a$, are c.e. Note that $P_{a,s+1} \subseteq P_{a,s}$. Furthermore, we may assume that for any $x\in P_{a,s}$, there is a step $t$ such that $[x]_{\tilde E^t} = [x]_{E_s}$, $[x]_{\tilde E^{t+1}} = [x]_{E_{s+1}} \supsetneq [x]_{E_s}$, and one of the following holds:
\begin{itemize}
	\item $[x]_{E_{s+1}} \subseteq P^{t+1}_a \cap P_{a,s+1}$, or
	
	\item $[x]_{E_{s+1}} \subseteq P^{t+1}_b \cap P_{b,s+1}$ for some $b <_{L_s} a$.
\end{itemize}
Therefore, the set $P_{a,s+1}$ is c.e. and $E_{s+1}$ is computable.

The stage $3e+1$ ensures the following: If $W_e$ is infinite, then for any $u\in R_{a,s+1}$, the intersection $[u]_{E_{s+1}} \cap W_e$ is not empty.

\underline{\emph{Stage $3e+2$.}} Assume that $e = \langle a,b,k\rangle$ and $s = 3e+1$. Suppose that $a,b,k$ satisfy the following conditions:
\begin{itemize}
	\item[(i)] $a,b\in L_s$ and $a <_{\mathcal{L}} b$,
	
	\item[(ii)] $dom(\varphi_k) \supseteq R_{b, l(b)}$
	
	\item[(iii)] $range(\varphi_k) \subseteq R_{a, l(a)}$.
\end{itemize}
Choose a fresh witness $w\in P_{b,l(b)} \cap P_{b,s}$ (here the freshness means that no $v\in [w]_{E_s}$ has been a witness at earlier stages). If $\varphi_k(w) \in A_s$, then enumerate the class $[w]_{E_s}$ into $B$. If $\varphi_k(w) \in B_s$, then enumerate $[w]_{E_s}$ into $A$. If $\varphi_k(w) \not\in A_s \cup B_s$, then enumerate $[w]_{E_s}$ into $A$ and $[\varphi_k(w)]_{E_s}$ into $B$, respectively. These actions ensure the following condition: $(w\in A) \Leftrightarrow (\varphi_k(w) \not\in A)$. For $a\in L_s$, set $P_{a,s+1} = P_{a,s} \setminus ([w]_{E_s} \cup [\varphi_k(w)]_{E_s})$.

If $a,b,k$ do not satisfy the conditions (i)--(iii) above, then proceed to the next stage.

\underline{\emph{Stage $3e+3$.}} First, let $s = 3e+2$ and let $a$ be the (unique) element from $L_{s+1}\setminus L_s$. Suppose that $b$ is the least element from $L_{s}$ such that $a<_{\mathcal{L}} b$. Choose a computable enumeration $\{x_m\}_{m\in\omega}$ such that $\bigcup_{m\in\omega} [x_m]_{E_s} = P_{b,s}$ and $[x_m]_{E_s} \neq [x_k]_{E_s}$ for $m \neq k$. For $c\in L_{s+1}$, define
\[
	P^0_{c,s+1} =
		\begin{cases}
			\bigcup_{k\in\omega} [x_{2k}]_{E_s}, & \text{if~} c=a,\\
			\bigcup_{k\in\omega} [x_{2k+1}]_{E_s}, & \text{if~} c=b,\\
			P_{c,s}, & \text{otherwise}.
		\end{cases}
\]

If the set $W_e$ is finite, then set $P_{c,s+1} = P^0_{c,s+1}$ and proceed to the next stage. If $W_e$ is infinite, then choose a fresh witness $w\in W_e \setminus A_s$. Enumerate $[w]_{E_s}$ into $B$ and define $P_{c,s+1} = P^0_{c,s+1} \setminus [w]_{E_s}$ for $c\in L_{s+1}$.
\medskip

\textbf{Verification.} Let $A = \bigcup_{s\in \omega} A_s$ and $B = \bigcup_{s\in\omega} B_s$. We prove a series of lemmas.

\begin{lemma}
	Suppose that $s\in\omega$.
	\begin{enumerate}
		\item For any $a\in L_s$, the set $P_{a,s}$ is infinite c.e.
		
		\item Given $x\in\omega$, one can effectively find the canonical index of the finite set $[x]_{E_s}$.
	\end{enumerate}
	In particular, these properties imply that the sets $R_{a,s}$, $a\in L_s$, are c.e., and $E_s$ is computable.
\end{lemma}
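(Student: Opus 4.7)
The plan is to induct on $s$. The base case $s=0$ is immediate from the construction: $P_{0,0}=2\mathbb{N}$ and $P_{1,0}=2\mathbb{N}+1$ are disjoint infinite c.e.\ sets, and $E_0 = id_\omega$ makes every equivalence class a singleton, so canonical indices are trivially computable.

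For the inductive step I would assume both properties at stage $s$ and analyze the three stage types separately. \emph{Stage $3e+3$} is the most straightforward: using the inductive hypothesis that $E_s$-classes have uniformly computable canonical indices, one effectively enumerates representatives of the (finite) $E_s$-classes inside $P_{b,s}$ in order of first appearance, and the partition into even- and odd-indexed classes yields two c.e.\ infinite halves $P^0_{a,s+1}$ and $P^0_{b,s+1}$; the optional removal of a single additional class into $B$ preserves both properties, and $E_{s+1}=E_s$. \emph{Stage $3e+2$} is simpler still, since it removes at most two $E_s$-classes from $\bigcup_{a\in L_s} P_{a,s}$ (one enumerated into $A$, one into $B$), leaving every remaining $P_{a,s+1}$ c.e.\ and infinite with $E_{s+1}=E_s$.

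\emph{Stage $3e+1$} is the real work and leans on the ``we may assume'' fairness clause: every $x\in P_{a,s}$ is processed at some step $t$, at which point $[x]_{E_{s+1}}\supsetneq [x]_{E_s}$ becomes saturated and $x$ is committed either to $P^{t+1}_a\cap P_{a,s+1}$ or to $P^{t+1}_b\cap P_{b,s+1}$ for some $b<_{L_s}a$. This immediately yields c.e.\ enumerations of $P_{a,s+1}$ and of each $P_{b,s+1}$ with $b<_{L_s}a$, and simultaneously a computation of the canonical index of $[x]_{E_{s+1}}$ (simulate until step $t+1$ and read off the finite saturated class). For $b>_{L_s}a$ nothing changes. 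The ``in particular'' conclusion of the lemma then follows at once: $R_{a,s}$ is a finite union of c.e.\ sets, and $E_s$ is decidable because $(x,y)\in E_s$ reduces to checking whether $y$ lies in the effectively computed canonical index of $[x]_{E_s}$.

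The main obstacle I anticipate is verifying that $P_{a,s+1}$ remains \emph{infinite} at stage $3e+1$. A priori the even steps might route every processed class of $P^t_a$ into strictly lower blocks and thus empty $P_{a,s+1}$ in the limit. The resolution lies in the flexibility of ``find (some) $u$ and $v$'' at even steps: the partner $v$ is allowed to lie anywhere in $\bigcup_{b\leq_{L_s}a}P^t_b$, and in particular in $P^t_a$ itself, in which case the merger keeps both classes inside block $a$. Prescribing a fair search order that alternates the choice of $v$ between $P^t_a$ and the strictly lower blocks retains at least half of the $E_s$-classes of $P_{a,s}$ inside $P_{a,s+1}$; since $P_{a,s}$ already contains infinitely many $E_s$-classes by the inductive hypothesis, the limit $P_{a,s+1}$ contains infinitely many $E_{s+1}$-classes and is therefore infinite.
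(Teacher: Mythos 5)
Your proposal is correct and follows essentially the same route as the paper's (very terse) proof: induction on $s$, observing that each of the three stage types is an effective procedure converting c.e.\ indices for the $P_{a,s}$ and a computable index for $x\mapsto [x]_{E_s}$ into the corresponding indices at stage $s+1$. Your only detour is the worry about infinitude at stage $3e+1$, which you resolve by tuning the choice of $v$ at even steps; note that the construction already secures this through the odd steps, where both $u$ and $v$ are drawn from $P^t_a$ and the merged class stays in block $a$ permanently (a saturated class is never reprocessed), so infinitely many classes survive in $P_{a,s+1}$ without any modification.
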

\begin{proof}
	The proof is by induction on $s$. Assume that $L_s = \{a_0 < a_1 < \ldots <a_n\}$, $P_{a_i,s} = W_{e_i}$ for $i\leq n$, and $\varphi_j$ is a total computable function such that for any $x$, $\varphi_j(x)$ is the canonical index of $[x]_{E_s}$. Consider the case $s=3e$. Suppose that $W_e$ is an infinite set. The stage $(3e+1)$ describes an effective procedure such that given indices $e,e_0,e_1,\ldots,e_n,j$ and a number $a\in L_s$ with infinite $W_e\cap P_{a,s}$, it produces c.e. indices for the sets $P_{a_i,s+1}$ and a computable index for the function which calculates canonical indices of~$[x]_{E_{s+1}}$. The stages $(3e+2)$ and $(3e+3)$ describe similar effective procedures.
\end{proof}

\begin{lemma}
	The sets $A$ and $B$ are c.e. in $(D(\mathcal{L})\oplus \emptyset^{(2)})$.
\end{lemma}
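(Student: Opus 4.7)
The plan is to show by induction on $s$ that, using the oracle $D(\mathcal{L})\oplus\emptyset^{(2)}$, one can uniformly compute: canonical indices of the finite sets $A_s$ and $B_s$; c.e.\ indices for each $P_{a,s}$ with $a\in L_s$; and an index of a total computable function that returns canonical indices of the classes $[x]_{E_s}$. Once this is in place, the conclusion is immediate: since $A=\bigcup_s A_s$ and $B=\bigcup_s B_s$ with $A_s\subseteq A_{s+1}$ and $B_s\subseteq B_{s+1}$, a $D(\mathcal{L})\oplus\emptyset^{(2)}$-oracle machine enumerates $A$ and $B$ by listing, at each stage, the finitely many freshly added elements.

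The heart of the inductive step is checking that every decision required by the construction at stage $s$ is answerable from $D(\mathcal{L})\oplus\emptyset^{(2)}$. First, the finite order $L_s$ together with $<_{\mathcal{L}}$ on it is given by $D(\mathcal{L})$, since the sequence $\{L_s\}_{s\in\omega}$ is strongly $D(\mathcal{L})$-computable. At stage $3e+1$ one asks whether $W_e$ is infinite and, if so, for the $\leq_{\mathcal{L}}$-greatest $a\in L_s$ with $W_e\cap P_{a,s}$ infinite; both are $\Pi^0_2$ questions about c.e.\ sets (the c.e.\ indices of $P_{a,s}$ being available by induction), hence decidable in $\emptyset^{(2)}$, while $L_s$ is finite. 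At stage $3e+2$, the defining conditions $dom(\varphi_k)\supseteq R_{b,l(b)}$ and $range(\varphi_k)\subseteq R_{a,l(a)}$ are again $\Pi^0_2$ statements about c.e.\ sets, and therefore decidable in $\emptyset^{(2)}$; the clause $a<_{\mathcal{L}} b$ in $L_s$ is decided from $D(\mathcal{L})$. Stage $3e+3$ is analogous, with the unique $a\in L_{s+1}\setminus L_s$ read off $D(\mathcal{L})$ and the infiniteness of $W_e$ queried to $\emptyset^{(2)}$.

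Granted the oracle answers at stage $s$, the effective prescriptions in the construction — the step-by-step refinements producing $P^{t+1}_b$ and $\tilde E^{t+1}$ at stage $3e+1$, the splitting of a block at stage $3e+3$, and the elementary enumerations into $A$ and $B$ at stages $3e+2$ and $3e+3$ — produce c.e.\ indices for the $P_{a,s+1}$ and a computable index for $E_{s+1}$, together with updated canonical indices for $A_{s+1}$ and $B_{s+1}$. I expect the only modest obstacle to be the bookkeeping around fresh witnesses at stages $3e+2$ and $3e+3$: one must record, relative to the oracle, which equivalence classes have already served as witnesses at earlier stages. But because only finitely many witnesses are chosen through stage $s$ and their identities are determined by finite oracle searches of the kind described above, this is purely organizational once the complexity table for the queries is in place.
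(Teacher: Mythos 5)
Your proof is correct and follows essentially the same route as the paper's: the paper's own (much terser) argument likewise reduces everything to the observations that the queries ``$W_e$ is infinite'', ``$W_e\subseteq dom(\varphi_k)$'', and ``$range(\varphi_k)\subseteq W_e$'' are $\Pi^0_2$ (hence answerable from $\emptyset^{(2)}$) while the approximations $L_s$ come from $D(\mathcal{L})$, leaving the inductive bookkeeping of indices implicit. Your version simply makes that bookkeeping explicit, relying as the paper does on the preceding lemma for the uniform c.e.\ indices of the $P_{a,s}$ and the canonical indices of the $E_s$-classes.
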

\begin{proof}
	First, note that each of the following conditions on $e,k\in\omega$ is $\Pi^0_2$:
	\begin{itemize}
		 \item $W_e$ is infinite,
		
		 \item $W_e \subseteq dom(\varphi_k)$,
		
		 \item $range(\varphi_k)\subseteq W_e$.
	\end{itemize}
	Thus, in order to construct the enumerations $\{A_{s}\}_{s\in\omega}$ and $\{ B_s\}_{s\in\omega}$, it is sufficient to use the oracle $(D(\mathcal{L})\oplus \emptyset^{(2)})$.
\end{proof}

\begin{lemma}
	The set $A$ is immune.
\end{lemma}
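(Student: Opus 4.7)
The plan is to use the explicit diagonalization embedded in stages $3e+3$, together with the invariant $A_s \cap B_s = \emptyset$ maintained throughout the construction. First I would verify this invariant by induction on $s$: the only delicate case is stage $3e+2$ under its third sub-case, where $[w]_{E_s}$ is enumerated into $A$ while $[\varphi_k(w)]_{E_s}$ is enumerated into $B$. Here $w \in P_{b,s}$ (with $b \in L_s$) while $\varphi_k(w) \in R_{a,l(a)}$ with $a <_{\mathcal{L}} b$; since elements can only migrate between blocks toward strictly smaller indices in $<_{\mathcal{L}}$ (this is the only movement performed at stages $3e+1$), at time $s$ the element $\varphi_k(w)$ still lies in some $P_{c',s}$ with $c' \leq_{\mathcal{L}} a <_{\mathcal{L}} b$, so in particular $c' \neq b$. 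The two classes therefore sit in disjoint blocks, hence are distinct $E_s$-classes, and their simultaneous enumeration into $A$ and $B$ is non-conflicting; the freshness of $w$, together with the sub-case assumption $\varphi_k(w) \notin A_s \cup B_s$, ensures that both classes lie entirely in $R_{1,s}$ before the action.

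Next I would carry out the diagonalization. Let $W_e$ be an infinite c.e. set and consider stage $3e+3$ with $s = 3e+2$. At that point $A_s \cup B_s$ is finite (only finitely many earlier stages have acted, each enumerating a finite set into $A$ or $B$), hence $W_e \cap R_{1,s}$ is still infinite; combined with the finiteness of the union of all prior witness classes, this provides a fresh $w \in W_e \setminus A_s$ with $[w]_{E_s} \subseteq R_{1,s}$. The construction then enumerates $[w]_{E_s}$ into $B$, so by the invariant $w \notin A$. Thus $w \in W_e \setminus A$, proving $W_e \not\subseteq A$.

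Finally I would check that $A$ is infinite. Since $\mathcal{L}$ is (w.l.o.g.) infinite, infinitely many pairs $a <_{\mathcal{L}} b$ eventually appear inside the $L_s$, and for each such pair one can produce infinitely many indices $k$ for which $\varphi_k$ is total with $\mathrm{range}(\varphi_k) \subseteq R_{a,l(a)}$ and, in addition, the values taken by $\varphi_k$ land outside $A_s \cup B_s$ (for instance, constant functions whose value is chosen in an unused portion of $R_{a,l(a)}$). Each such triple $\langle a,b,k \rangle$ triggers the third sub-case of stage $3e+2$, enumerating a fresh $[w]_{E_s}$ into $A$. The main obstacle I anticipate is precisely this infiniteness half: one must argue with some care that infinitely many stages $3e+2$ genuinely route their witness into $A$ rather than into $B$, whereas the \emph{no infinite c.e. subset} half of immunity follows almost directly from the design of stages $3e+3$ and the disjointness invariant established in the first step.
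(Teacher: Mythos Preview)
Your proposal is correct and, for the ``no infinite c.e.\ subset'' half, follows exactly the paper's route: stage $3e+3$ places some $w\in W_e$ into $B$, and the invariant $A\cap B=\emptyset$ then gives $W_e\not\subseteq A$. The paper's proof consists of precisely those two sentences.

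You are more thorough than the paper in two respects, both legitimate. First, you actually verify the disjointness invariant $A_s\cap B_s=\emptyset$, which the paper merely lists among the properties the construction is meant to maintain; your analysis of the third sub-case of stage $3e+2$ (showing $[w]_{E_s}$ and $[\varphi_k(w)]_{E_s}$ lie in distinct $P$-blocks because elements only migrate $\leq_{\mathcal L}$-downward) is correct. Second, you address the infiniteness of $A$, which the paper's proof omits entirely. Your idea---use constant functions $\varphi_k\equiv v$ with $v\in R_{0,0}$ to trigger the second or third sub-case of stage $3\langle 0,1,k\rangle+2$ infinitely often---works: if $A$ were finite, then for any $v\in R_{0,0}\setminus A$ and any sufficiently large such $k$ one has $v\notin A_s$, so the stage enumerates a fresh class into $A$, a contradiction. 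A minor remark: in your invariant check, the fact that $[w]_{E_s}\subseteq R_{1,s}$ follows already from $w\in P_{b,s}$ together with the ``each $E_s$-class is contained in a single block'' property; freshness is needed for the diagonalization against $\varphi_k$, not for disjointness.
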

\begin{proof}
	For an infinite c.e. set $W_e$, the step $(3e+3)$ ensures that $W_e \cap B \neq \emptyset$. Since $A\cap B = \emptyset$, we have $W_e\not\subseteq A$.
\end{proof}

\begin{lemma}\label{lem:odi}
	\begin{enumerate}
		\item For any $a\in\mathcal{L}$ and $s\geq l(a)$, we have $\lach_A(R_{a,s}) \equiv_m \lach_A(R_{a,l(a)})$.
		
		\item Assume that $W_e$ is an infinite c.e. set, and $a$ is the $\leq_{\mathcal{L}}$-greatest element from $L_{3e}$ such that the set $W_e \cap P_{a,3e}$ is infinite. Then $\lach_A(W_e) \equiv_m \lach_A(R_{a,3e+1})$.
		
		\item If $a,b\in\mathcal{L}$ and $a\neq b$, then $\lach_A(R_{a,l(a)}) \not\equiv_m \lach_A (R_{b,l(b)})$.
	\end{enumerate}
\end{lemma}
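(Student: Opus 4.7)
My plan rests on two core invariants of the construction: (I) every $E_\infty$-class, where $E_\infty := \bigcup_s E_s$, lies wholly in $A$, wholly in $B$, or wholly in $\bigcup_{c\in\mathcal{L}} P_{c,\infty}$, so that $E_\infty$-equivalent elements share their $A$-membership; and (II) the \emph{representative property}, to be established by induction on $s$: every $x\in R_{a,s}$ admits some $y\in R_{a,l(a)}\cap[x]_{E_\infty}$. The inductive step for (II) uses that new elements enter $R_{a,\cdot}$ only at stage $3e+1$ migrations (in which they are $E$-merged with a pre-existing element of $R_{a,\cdot}$); the stage $3e+3$ splits, which replace $P_{q,s}$ by $P_{p,s+1}\cup P_{q,s+1}$ with $p$ the new element just below $q$, contribute nothing new to $R_{a,s+1}$, because either $q\leq_{L_s} a$ (the split stays inside $R_{a,s}$) or $p>_\mathcal{L} a$ (the split is excluded from $R_{a,s+1}$); the alternative $p<_\mathcal{L} a<_\mathcal{L} q$ is ruled out by the minimality of $q$.

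For part~(1), I would first observe that $R_{a,l(a)}\setminus R_{a,s}$ is finite, since only stages $3e+2$ and $3e+3$ remove elements from $R_{a,\cdot}$ and each such stage removes only a single finite $E$-class. Property~(d) of $\lach_A$ then gives $\lach_A(R_{a,l(a)})\equiv_m\lach_A(R_{a,l(a)}\cap R_{a,s})$, and property~(c) completes one direction. For the converse, I would build a point-wise reduction: given $x\in R_{a,s}$, search simultaneously over $t$ for the first $y\in[x]_{E_t}\cap R_{a,l(a)}$ (guaranteed by~(II)) and output a fixed $R_{a,l(a)}$-index of $y$; invariant~(I) then yields $x\in A\Leftrightarrow y\in A$.

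For part~(2), stage $3e+1$ explicitly guarantees $[u]_{E_{3e+1}}\cap W_e\neq\emptyset$ for every $u\in R_{a,3e+1}$, which combined with~(I) and the computability of $E_{3e+1}$ yields $\lach_A(R_{a,3e+1})\leq_m\lach_A(W_e)$. For the reverse, I would argue that $F:=W_e\setminus R_{a,3e+1}$ is finite: stage $3e+1$ redistributes elements only among blocks $P_b$ with $b\leq_{L_{3e}} a$, so $R_{a,3e+1}=R_{a,3e}$, and any $v\in W_e\setminus R_{a,3e}$ lies either in $\bigcup_{b>_{L_{3e}}a}P_{b,3e}$ (met only finitely by $W_e$, by maximality of $a$) or in the finite set $A_{3e}\cup B_{3e}$. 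Properties~(c) and~(d) then combine to give $\lach_A(W_e)\equiv_m\lach_A(W_e\setminus F)\leq_m\lach_A(R_{a,3e+1})$.

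For part~(3), assume without loss of generality $a<_\mathcal{L} b$; then $R_{a,s}\subseteq R_{b,s}$ at $s=\max(l(a),l(b))$, and part~(1) yields $\lach_A(R_{a,l(a)})\leq_m\lach_A(R_{b,l(b)})$. For the strict direction, I would suppose toward contradiction that a computable $g$ witnesses $\lach_A(R_{b,l(b)})\leq_m\lach_A(R_{a,l(a)})$. Composing $g$ with computable enumerations of $R_{b,l(b)}$ and $R_{a,l(a)}$ produces a partial computable $\varphi_k$ with $R_{b,l(b)}\subseteq\mathrm{dom}(\varphi_k)$, $\varphi_k(R_{b,l(b)})\subseteq R_{a,l(a)}$, and $x\in A\Leftrightarrow\varphi_k(x)\in A$ for every $x\in R_{b,l(b)}$. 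For $e=\langle a,b,k\rangle$ sufficiently large that $a,b\in L_{3e+1}$, stage $3e+2$ fires: it selects a fresh witness $w\in P_{b,l(b)}\cap P_{b,3e+1}$ and enumerates $E_{3e+1}$-classes into $A$ and $B$ so that $(w\in A)\not\Leftrightarrow(\varphi_k(w)\in A)$; a case split on whether $\varphi_k(w)$ lies in $A_{3e+1}$, $B_{3e+1}$, or neither, together with invariant~(I), shows the disagreement persists to the limit, contradicting the hypothesis on $\varphi_k$. The main technical obstacle in this plan will be the inductive verification of invariant~(II) outlined above, on which all three parts ultimately rely.
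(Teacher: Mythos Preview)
Your proposal is correct and follows the same line as the paper, which simply defers to Odifreddi's Proposition~VI.2.2 and records the hints that stage~$3e+1$ secures part~(2), stages~$3\langle a,b,k\rangle+2$ secure part~(3), and $R_{a,3e+3}=^{\ast}R_{a,3e+2}$. Your invariants~(I) and~(II) are precisely the facts one extracts from the listed properties of the construction (in particular the clause ``if $a\in L_s$ and $x\in P_{a,s+1}$, then there is $y\in P_{a,s}\cap[x]_{E_{s+1}}$''), and your use of the $=^{\ast}$ argument together with property~(d) of $\Psi_A$ matches the paper's remark exactly. One small point worth making explicit in part~(3): the phrase ``for $e=\langle a,b,k\rangle$ sufficiently large'' should be read as choosing, via padding, an index $k'$ with $\varphi_{k'}=\varphi_k$ large enough that $a,b\in L_{3\langle a,b,k'\rangle+1}$; you should also note that condition~(iii) requires $\mathrm{range}(\varphi_k)\subseteq R_{a,l(a)}$, so define $\varphi_k$ to diverge off $R_{b,l(b)}$.
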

\begin{proof}
	The proof is essentially the same as in \cite[Proposition VI.2.2]{Odi92}. Here we only note that the stage $3e+1$ ensures the second condition, and stages $3\langle a,b,e\rangle+2$ ensure the third condition. In addition, we have $R_{a,3e+3} =^{\ast} R_{a,3e+2}$ and $\lach_A(R_{a,3e+3}) \equiv_m \lach_A(R_{a,3e+2})$.
\end{proof}

Lemma~\ref{lem:odi} shows that the function
\[
	F\colon a\in\mathcal{L} \mapsto \Psi_A(R_{a,l(a)})
\]
is an isomorphism from $\mathcal{L}$ onto $\Dm(\leq A)$.
This concludes the proof of Proposition~\ref{prop:immune}.
\end{proof}


\end{document}